\numberwithin{equation}{section}
\theoremstyle{plain}
\newtheorem{cor}[equation]{Corollary}
\newtheorem{lemma}[equation]{Lemma}
\newtheorem{thm}[equation]{Theorem}
\newtheorem*{thma}{Theorem A}
\newtheorem*{thmb}{Theorem B}
\theoremstyle{definition}
\newtheorem{remark}[equation]{Remark}
\newcommand{\dlabel}[1]{\ifmmode \text{\ttfamily \upshape [#1] } \else
{\ttfamily \upshape [#1] }\fi \label{#1}}
\newcommand{\Z}{\operatorname{Z} }
\begin{document}

\title{On finite capable $p$-groups of class $2$ with cyclic commutator subgroups}

\author{Manoj K.~Yadav}

\address{School of Mathematics, Harish-Chandra Research Institute \\
Chhatnag Road, Jhunsi, Allahabad - 211 019, INDIA}

\email{myadav@mri.ernet.in}
\thanks{2000 Mathematics Subject Classification. 20D15}

\date{\today}

\begin{abstract}
 We study finite capable $p$-groups $G$ of nilpotency class $2$ such that the commutator subgroup $\gamma_2(G)$ of $G$ is cyclic and the center of $G$ is contained in the Frattini subgroup of $G$.
\end{abstract}
\maketitle

\section{Introduction}

A group $G$ is said to be \emph{capable} if there exists some group $H$ such that $G \cong H/\Z(H)$, where $\Z(H)$ denotes the center of $H$. It is really very interesting (as explained in many papers on the topic) to know the structure of capable group. Mathematicians have studied two aspects of capable groups. The first one is to give an upper bound on the index of $\Z(G)$ in a capable group $G$ in the form of some function of the order of $\gamma_2(G)$, where $\gamma_2(G)$ denotes the commutator subgroup of $G$. Recently this was studied 
in \cite{MI01} and in \cite{PS02, PS05}.
For the related old results, please see the references of \cite{PS05}. The second aspect of the problem is to find the structure of a capable group. This was studied in \cite{RB38}, \cite{BFP79}, \cite{HH90}, \cite{HN96}, \cite{AM05} and \cite{AM07}. A classification of $2$-generated finite capable $p$-groups of nilpotency class $2$ was given by M. R. Bacon and L. C. Kappe in \cite{BK03} for an odd prime $p$, and by A. Magidin (using classification of $2$-generated finite $2$-groups of nilpotency class $2$ from \cite{KVS99}) in \cite{AM06} for $p = 2$.

 Notice that the commutator subgroup of a finite $2$-generated $p$-groups of class $2$ is always cyclic. But a finite  $p$-groups of class $2$ with cyclic commutator subgroup may not be $2$-generated as shown by extra-special $p$-groups. So it is interesting (at least for me) to study finite capable $p$-groups of class $2$ with cyclic commutator subgroups. In this short note we study a part of this problem. Our first result is the following:

\begin{thma}
Let $G$ be finite capable $p$-groups of nilpotency class $2$ with cyclic commutator subgroup $\gamma_2(G)$. Then $G/\Z(G)$ is generated by $2$ elements and $|G/\Z(G)| = 
|\gamma_2(G)|^2$.
\end{thma}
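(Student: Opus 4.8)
The plan is to prove $|G/\Z(G)|\ge|\gamma_2(G)|^2$ by a bilinear–form argument that uses nothing about capability, and then to prove the reverse inequality — which simultaneously yields $2$‑generation of $G/\Z(G)$ — by a short commutator computation inside a group $H$ with $G\cong H/\Z(H)$. Assume $\gamma_2(G)\ne1$ (otherwise $G$ is abelian and both assertions are trivial) and write $\gamma_2(G)=\gen{c}$ with $|c|=p^n$, $n\ge1$. I would first record the standard fact that for a finite $p$-group of class $2$ with cyclic $\gamma_2$ one has $\gamma_2(G)=\gen{[a,b]}$ for suitable $a,b$: $\gamma_2(G)$ is generated by commutators, and since $\gamma_2(G)/\Phi(\gamma_2(G))$ is cyclic of order $p$, some commutator $[a,b]$ lies outside $\Phi(\gamma_2(G))=\gamma_2(G)^p$ and hence generates; fix such $a,b$ and put $c=[a,b]$. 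Since $G$ has class $2$, the commutator induces a well-defined, bi-additive, alternating pairing $\beta\colon G/\Z(G)\times G/\Z(G)\to\gamma_2(G)$, which is non-degenerate (its radical is trivial). From $[x,y]^{p^n}=[x^{p^n},y]=1$ we get $\exp(G/\Z(G))\mid p^n$; as $\beta(\bar a,\bar b)=c$ has order $p^n$, both $\bar a,\bar b$ have order $p^n$, and feeding a relation $\bar a^{\,i}=\bar b^{\,j}$ into $\beta(-,\bar a)$ and $\beta(-,\bar b)$ forces $p^n\mid i,j$, so $U:=\gen{\bar a,\bar b}=\gen{\bar a}\times\gen{\bar b}$ has order $p^{2n}$. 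The restriction $\beta|_{U\times U}$ is again non-degenerate, hence the injection $U\to\Hom(U,\gamma_2(G))$ is a bijection and the usual orthogonal-complement argument gives $G/\Z(G)=U\oplus U^{\perp}$. Thus $|G/\Z(G)|=p^{2n}\,|U^{\perp}|\ge|\gamma_2(G)|^2$, and the theorem follows once we show $U^{\perp}=1$ (which is precisely the assertion that $G/\Z(G)=U$ is $2$-generated).

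Next I would reduce the remaining point to a central-product picture. Suppose $C:=U^{\perp}\ne1$, put $G_0=\gen{a,b}$, and let $L$ be the full preimage of $C$ in $G$. Orthogonality gives $[L,G_0]=1$, and $U+U^{\perp}=G/\Z(G)$ gives $G=G_0L$; moreover $\gamma_2(G_0)=\gen{c}=\gamma_2(G)$, while $\Z(L)=\Z(G)$ (an element centralizing $L$ also centralizes $G_0$, hence $G$), so $L/\Z(L)=C\ne1$, $L$ is non-abelian, and $1\ne\gamma_2(L)\le\gen{c}$. Hence the unique subgroup of order $p$ of $\gamma_2(G)$, say $\gen{z_0}$ with $z_0=c^{p^{n-1}}$, lies in $\gamma_2(G_0)\cap\gamma_2(L)$; concretely $z_0=[a^{p^{n-1}},b]$ with $a,b\in G_0$, while applying the fact above to the cyclic group $\gamma_2(L)$ gives $d,e\in L$ with $z_0=[d,e]$.

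Now capability is brought in to rule out $C\ne1$. Write $G=H/\Z(H)$ and let $\phi\colon H\to G$ be the quotient map, so $\Ker\phi=\Z(H)$; let $\widetilde{G_0},\widetilde L$ be the preimages of $G_0,L$, so $H=\widetilde{G_0}\widetilde L$ and $[\widetilde{G_0},\widetilde L]\le\Z(H)$. The key observation is: if $\tilde x,\tilde y,\tilde w\in H$ satisfy $[\tilde x,\tilde w],[\tilde y,\tilde w]\in\Z(H)$, then $[\,[\tilde x,\tilde y],\tilde w\,]=1$, because $\tilde w$ conjugates each of $\tilde x,\tilde y$ by a central element and therefore fixes $[\tilde x,\tilde y]$. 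Choose lifts $\tilde a,\tilde b\in\widetilde{G_0}$ and $\tilde d,\tilde e\in\widetilde L$ of $a,b,d,e$. For $\tilde w\in\widetilde L$ one has $[\tilde a,\tilde w]\in[\widetilde{G_0},\widetilde L]\le\Z(H)$, hence $[\tilde a^{p^{n-1}},\tilde w]=[\tilde a,\tilde w]^{p^{n-1}}\in\Z(H)$, and likewise $[\tilde b,\tilde w]\in\Z(H)$; by the observation, $[\tilde a^{p^{n-1}},\tilde b]$ centralizes $\widetilde L$. Symmetrically, $[\tilde d,\tilde e]$ centralizes $\widetilde{G_0}$. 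Since $[\tilde a^{p^{n-1}},\tilde b]$ and $[\tilde d,\tilde e]$ both map to $z_0$ under $\phi$, they differ by a central element of $H$, so $[\tilde a^{p^{n-1}},\tilde b]$ centralizes $\widetilde{G_0}$ as well; hence it centralizes $H=\widetilde{G_0}\widetilde L$, i.e.\ $[\tilde a^{p^{n-1}},\tilde b]\in\Z(H)=\Ker\phi$, and so $z_0=\phi([\tilde a^{p^{n-1}},\tilde b])=1$, contradicting $|z_0|=p$. Therefore $C=1$, $G/\Z(G)=U$ is generated by $\bar a,\bar b$, and $|G/\Z(G)|=p^{2n}=|\gamma_2(G)|^2$.

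The main obstacle is exactly the passage from "$\ge$" to "$=$": the lower bound is soft and capability-free, while matching it requires first recognizing the orthogonal splitting as a genuine central-product decomposition $G=G_0L$ with $\gamma_2(G_0)\cap\gamma_2(L)\ne1$, and then seeing that the "doubly realized" central commutator $z_0$ cannot survive in any $H$ with $H/\Z(H)=G$ — equivalently, that $z_0$ lies in the epicenter $\Z^*(G)$, so $G$ is not capable. The one trick that makes this last step go through cleanly — rather than only placing $z_0$ in the second centre $\Z_2(H)$ — is that a commutator $[\tilde x,\tilde y]$ is \emph{genuinely} centralized by anything that centralizes both $\tilde x$ and $\tilde y$ modulo $\Z(H)$.
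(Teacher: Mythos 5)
Your proof is correct, and it takes a genuinely different route from the paper's. The paper first uses P.~Hall's isoclinism theorem to replace a witnessing group by a finite $p$-group $H$ with $G\cong H/\Z(H)$ and $\Z(H)\le\gamma_2(H)$ (Lemmas \ref{lemma1} and \ref{lemma2}), then proves that $H/\Z_2(H)$ is $2$-generated by a Hall--Witt computation (Lemma \ref{lemma3}), and finally reads off the order from the exponent of $G/\Z(G)$, much as you do implicitly by computing $|U|=p^{2n}$. You instead split the work differently: the commutator pairing $\beta$ on $G/\Z(G)$ gives, with no use of capability, the orthogonal decomposition $G/\Z(G)=U\oplus U^{\perp}$ with $U\cong C_{p^n}\times C_{p^n}$ (hence the lower bound $|G/\Z(G)|\ge|\gamma_2(G)|^2$ for \emph{every} class-$2$ $p$-group with cyclic commutator subgroup), and capability enters only to kill $U^{\perp}$: the central-product decomposition $G=G_0L$ with $[G_0,L]=1$ forces the minimal commutator $z_0=[a^{p^{n-1}},b]=[d,e]$ to be realized on both factors, and your observation that a commutator $[\tilde x,\tilde y]$ is genuinely centralized by anything centralizing $\tilde x,\tilde y$ modulo $\Z(H)$ then pushes $z_0$ into $\Z(H)=\Ker\phi$, a contradiction. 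I checked the individual steps (non-degeneracy of $\beta|_{U\times U}$, the counting argument for the splitting, $\Z(L)=\Z(G)$, the descent of $z_0$ into $\gamma_2(L)$ via the unique minimal subgroup of a cyclic $p$-group, and the lifting computation in $H$) and they all hold. What your route buys: it avoids Hall's isoclinism result entirely (the paper explicitly says it knows no proof of Lemma \ref{lemma1} without it), it works with an arbitrary witness $H$ rather than a normalized one, it isolates a capability-free inequality, and it exhibits an explicit non-trivial epicentral element whenever that inequality is strict, which is a sharper obstruction than the paper records. What the paper's route buys is brevity given its lemmas, plus Lemma \ref{lemma3}, which has independent interest for class-$3$ groups and is reused in spirit for Corollary \ref{cor2}.
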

 
We would like to remark that 
 Theorem A  can be deduced from a very general theorem of Isaacs' \cite[Theorem C]{MI01}. But the proof of Isaacs' theorem makes use of three pages of rigorous mathematics. Our proof presented in this note is very short as well as simple.

Our next result makes use of \cite[Corollary 4.3]{BK03} and \cite[Theorem 8.1]{AM06}.

\begin{thmb}
 Let $G$ be finite $p$-groups of nilpotency class $2$ such that $\gamma_2(G)$ is cyclic and $\Z(G) \le \Phi(G)$, where $\Phi(G)$ denotes the Frattini subgroup of $G$. 
Then $G$ is capable if and only if it is isomorphic to one of the groups listed in Theorem \ref{thm1} or Theorem \ref{thm2}.
\end{thmb}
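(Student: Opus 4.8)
The plan is to reduce Theorem B to the existing classification of $2$-generated finite capable $p$-groups of class $2$, and then to sieve that list against the extra hypothesis $\Z(G) \le \Phi(G)$. The crucial preliminary observation is that, under the hypotheses of Theorem B, a capable $G$ is necessarily $2$-generated. Indeed, since $\Z(G) \le \Phi(G)$ and $\Phi(G)$ lies in every maximal subgroup of $G$, the maximal subgroups of $G/\Z(G)$ are exactly the images of the maximal subgroups of $G$; hence $\Phi(G/\Z(G)) = \Phi(G)/\Z(G)$ and $G/\Phi(G) \cong \bigl(G/\Z(G)\bigr)/\Phi\bigl(G/\Z(G)\bigr)$. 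By Theorem A, a capable $G$ of class $2$ with $\gamma_2(G)$ cyclic has $G/\Z(G)$ generated by $2$ elements, so the minimal number of generators of $G$ equals that of $G/\Z(G)$, namely $2$.

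Granting this, the ``only if'' direction runs as follows. A $2$-generated finite $p$-group of class $2$ has cyclic commutator subgroup automatically, so the groups $G$ we must consider are precisely the $2$-generated finite capable $p$-groups of class $2$ satisfying $\Z(G) \le \Phi(G)$. The $2$-generated finite capable $p$-groups of class $2$ are completely listed in \cite[Corollary 4.3]{BK03} for odd $p$ and in \cite[Theorem 8.1]{AM06} for $p = 2$. I would then run through the isomorphism types on those two lists, compute $\Z(G)$ and $\Phi(G)$ from the given power-commutator presentations, and keep exactly those types for which $\Z(G) \le \Phi(G)$; these are the groups recorded in Theorem \ref{thm1} (odd $p$) and Theorem \ref{thm2} ($p = 2$). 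For the ``if'' direction, each group listed in Theorems \ref{thm1} and \ref{thm2} occurs on the Bacon--Kappe / Magidin list and is therefore capable, and one checks directly from its presentation that it has class $2$, cyclic $\gamma_2(G)$, and $\Z(G) \le \Phi(G)$.

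The main obstacle is the bookkeeping in the ``only if'' direction. The classifications in \cite{BK03} and \cite{AM06} consist of several infinite families of presentations depending on numerical parameters (the $p = 2$ case of \cite{AM06} being the more intricate), and for each family one must express $\Z(G)$ and $\Phi(G)$ in terms of those parameters and then solve the inclusion $\Z(G) \le \Phi(G)$. Some care is needed with small exponents, with degenerate members of a family (for instance where a quotient becomes abelian or extraspecial), and with the redundancy inherent in presentations, so that the surviving types are listed without repetition in Theorems \ref{thm1} and \ref{thm2}. Once the behaviour of the center and the Frattini subgroup along each family is pinned down, matching the retained families to the statements of those two theorems is routine.
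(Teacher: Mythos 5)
Your reduction is exactly the paper's: Theorem A gives that $G/\Z(G)$ is $2$-generated, and since $\Z(G)\le\Phi(G)$ the Frattini quotient of $G$ coincides with that of $G/\Z(G)$, so $G$ itself is $2$-generated (this is the paper's Corollary \ref{cor2}); one then invokes the Bacon--Kappe and Magidin classifications. Where you diverge is in the second half: you assume Theorems \ref{thm1} and \ref{thm2} are \emph{sublists} of those classifications obtained by sieving against the condition $\Z(G)\le\Phi(G)$, and you flag the resulting parameter bookkeeping as the main obstacle. In fact the paper quotes the two classification theorems verbatim as Theorems \ref{thm1} and \ref{thm2} (note also that \ref{thm1} is the $p=2$ case and \ref{thm2} the odd-$p$ case, the reverse of what you wrote), so no sieve is performed --- and none is needed, because the sieve is vacuous: if $G$ is a $2$-generated $p$-group and some $z\in\Z(G)$ lay outside $\Phi(G)$, then $z$ could be completed to a generating pair $\{z,w\}$, forcing $G=\langle z,w\rangle$ to be abelian; hence every non-abelian $2$-generated $p$-group automatically satisfies $\Z(G)\le\Phi(G)$, and all groups on the two lists are non-abelian since $\gamma\ge 1$ in each family. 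So your plan would succeed, but the case-by-case computation of $\Z(G)$ and $\Phi(G)$ you anticipate can be replaced by this one-line observation, which is what makes the paper's proof of Theorem B a single sentence.
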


\section{Proofs}

The following concept of isoclinism was introduced by P. Hall in \cite{PH40}. Two groups $G$ and $H$ are said to be \emph{isoclinic} if there exist isomorphisms $\phi: G/\Z(G) \to H/\Z(H)$ and $\theta: \gamma_2(G) \to \gamma_2(H)$ such that $\theta ([g_1, g_2]) = [h_1, h_2]$ for all $g_1, g_2 \in G$, where $h_{i} \Z(H) = \phi(g_{i}\Z(G))$ for $i = 1, \;2$. The resulting pair $(\phi, \theta)$ is called an \emph{isoclinism} of $G$ onto $H$.  Notice that isoclinism is an equivalence relation among groups.

 By using the concept of isoclinism of groups and a result of P. Hall \cite{PH40}, we prove the following lemma:

\begin{lemma}\label{lemma1}
 Let $G$ be a finite capable group. Then there is a finite group $H$ such that $G \cong H/\Z(H)$ and $\Z(H) \le \gamma_2(H)$.
\end{lemma}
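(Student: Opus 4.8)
The plan is to exploit P. Hall's notion of isoclinism together with the fact, proved in \cite{PH40}, that every isoclinism class contains a \emph{stem group} --- that is, a group $H$ with $\Z(H) \le \gamma_2(H)$ --- and the elementary observation built into the definition of isoclinism that isoclinic groups have isomorphic central quotients. So the whole argument is essentially: unwind capability, pass to a stem group in the resulting isoclinism class, and then check finiteness.

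First I would unwind the hypothesis: since $G$ is capable there is a (possibly infinite) group $K$ with $G \cong K/\Z(K)$. Because $[K:\Z(K)] = |G|$ is finite, Schur's theorem gives that $\gamma_2(K)$ is finite. Next I would invoke Hall's result: the isoclinism class of $K$ contains a stem group $H$, so $\Z(H) \le \gamma_2(H)$. (If one prefers to apply the stem-group theorem only to finite groups, one first replaces $K$ by a finite group with the same central quotient: take a finitely generated subgroup $L_0 \le K$ with $L_0\Z(K) = K$, which exists since $G$ is finitely generated; then $\Z(L_0) = L_0 \cap \Z(K)$ and $\gamma_2(L_0) = \gamma_2(K)$, so $L_0/\Z(L_0) \cong G$; now factor out a free-abelian complement $N$ to the torsion subgroup of the finitely generated abelian group $\Z(L_0)$. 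Since $\gamma_2(L_0) = \gamma_2(K)$ is finite, $\gamma_2(L_0) \cap N = 1$, whence $\Z(L_0/N) = \Z(L_0)/N$ is finite and $(L_0/N)/\Z(L_0/N) \cong G$, so $L_0/N$ is a finite group with central quotient $\cong G$, and one runs the rest of the argument with this group in place of $K$.)

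It then remains to read off the conclusion. By the very definition of isoclinism there is an isomorphism $\phi \colon K/\Z(K) \to H/\Z(H)$, so $H/\Z(H) \cong K/\Z(K) \cong G$, while $\Z(H) \le \gamma_2(H)$ holds by the choice of $H$. Finally, $H$ is finite: isoclinism also supplies an isomorphism $\gamma_2(K) \to \gamma_2(H)$, so $\gamma_2(H)$ is finite; hence $\Z(H)$ is finite, being contained in $\gamma_2(H)$; hence $H$ is finite, since $H/\Z(H) \cong G$ is finite. This yields the group $H$ claimed in the lemma.

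The only point that needs genuine care is the finiteness bookkeeping: the lemma demands a \emph{finite} $H$, so one must know that Hall's construction (or the reduction of $K$ to a finite group) can be carried out with the finiteness of $\gamma_2(K)$ --- supplied by Schur's theorem --- in hand; everything after the stem group is produced is formal. An alternative that sidesteps citing Hall's theorem wholesale is to build $H$ directly as a subgroup of (a suitable finite quotient of) $K$ chosen minimal subject to $H\Z(K) = K$, and to verify that minimality forces $\Z(H) = H \cap \Z(K) \le \gamma_2(H)$; the descending chain condition needed for "minimal" to be meaningful is again guaranteed by the finiteness of $\gamma_2(K)$, so this is really the same argument in concrete form.
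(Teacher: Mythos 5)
Your proof is correct and follows essentially the same route as the paper: pass to a stem group $H$ (with $\Z(H) \le \gamma_2(H)$) in the isoclinism class of a group witnessing capability, note that isoclinic groups have isomorphic central quotients, and use Schur's theorem to conclude that $\gamma_2(H)$, hence $\Z(H)$, hence $H$ is finite. Your parenthetical reduction to a finite witness before invoking Hall's stem-group theorem is a point of care the paper glosses over, but otherwise the two arguments coincide.
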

\begin{proof}
 Since $G$ is capable, there exists some group $H_1$ such that $G \cong H_1/\Z(H_1)$. Consider the isoclinism family of $H_1$. Then it follows from an important result of P. Hall  \cite{PH40} that this family has a group $H$ (say) such that $\Z(H) \le \gamma_2(H)$. Since $H_1$ and $H$ are isoclinic, we have $H/\Z(H) \cong H_1/\Z(H_1) \cong G$. Now, $H/\Z(H) \cong G$ being finite, it follows from a classical theorem of Schur that $\gamma_2(H)$ is finite. This shows that $\Z(H)$ is finite. Thus $H$ is finite, since both $H/\Z(H)$ and $\Z(H)$ are finite. This completes the proof of the lemma. \hfill $\Box$

\end{proof}

M. Isaacs \cite[Lemma 2.1]{MI01} has given an elegant and self-contained proof of the fact that for a finite capable group $G$, there is some finite group $H$ such that $G \cong H/\Z(H)$.
But we do not know of a proof of our Lemma \ref{lemma1} without using Hall's result on isoclinism.

\begin{cor}\label{cor1}
Let $G$ be a finite nilpotent (solvable) capable group. Then there is a finite nilpotent (solvable) group $H$ such that $G \cong H/\Z(H)$ and $\Z(H) \le \gamma_2(H)$.
\end{cor}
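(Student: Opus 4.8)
The corollary will follow almost immediately from Lemma~\ref{lemma1}; the only additional ingredients are two standard facts describing how nilpotency and solvability of a central quotient $H/\Z(H)$ reflect back onto $H$. First I would apply Lemma~\ref{lemma1} to the given finite capable group $G$ to produce a finite group $H$ with $G \cong H/\Z(H)$ and $\Z(H) \le \gamma_2(H)$. It then remains only to verify that this very group $H$ is nilpotent (respectively solvable) under the corresponding hypothesis on $G$.

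Suppose first that $G$ is solvable. Then $H/\Z(H) \cong G$ is solvable, and since $\Z(H)$ is abelian, $H$ is an extension of a solvable group by an abelian group and hence is itself solvable. Now suppose instead that $G$ is nilpotent. Then $H/\Z(H) \cong G$ is nilpotent, and by the classical fact that a group whose central quotient is nilpotent of class $c$ is itself nilpotent of class at most $c+1$, we conclude that $H$ is nilpotent. In either case $H$ is finite by Lemma~\ref{lemma1}, which completes the argument.

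I do not expect any genuine obstacle here: the substantive work is already contained in Lemma~\ref{lemma1}, and the remaining point is simply to invoke the two closure properties in the direction in which the central quotient controls the whole group. One could alternatively note that solvability and nilpotency are invariants of an isoclinism family, since an isoclinism induces isomorphisms $H/\Z(H) \cong H_1/\Z(H_1)$ and $\gamma_2(H) \cong \gamma_2(H_1)$, so that the stem group $H$ furnished inside the proof of Lemma~\ref{lemma1} already inherits these properties from the original group $H_1$ with $G \cong H_1/\Z(H_1)$; but the direct argument above is shorter and self-contained.
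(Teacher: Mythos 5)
Your proof is correct and follows exactly the route the paper intends: the paper states this corollary with no proof at all, treating it as immediate from Lemma~\ref{lemma1}, and the two closure facts you supply (a group whose central quotient is nilpotent is nilpotent, and an extension of an abelian normal subgroup by a solvable quotient is solvable) are precisely the missing details. Your closing alternative via isoclinism invariance is also fine, though as you note it is not really independent, since one must first apply the same direct facts to $H_1$ to know that $H_1$ itself is nilpotent (solvable).
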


In particular, for a finite $p$-group, we have the following result.

\begin{lemma}\label{lemma2}
 Let $G$ be a finite capable $p$-group. Then there is a finite $p$-group $H$ such that $G \cong H/\Z(H)$ and $\Z(H) \le \gamma_2(H)$.
\end{lemma}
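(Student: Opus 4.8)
The plan is to deduce this directly from Corollary~\ref{cor1}. Since a finite $p$-group is nilpotent, $G$ is a finite nilpotent capable group, so Corollary~\ref{cor1} provides a finite nilpotent group $H$ with $G \cong H/\Z(H)$ and $\Z(H) \le \gamma_2(H)$. All that remains is to argue that such an $H$ must itself be a $p$-group.

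Being finite and nilpotent, $H$ decomposes as a direct product $H = P \times Q$ of its Sylow $p$-subgroup $P$ and its Hall $p'$-subgroup $Q$. Then $\Z(H) = \Z(P) \times \Z(Q)$ and $\gamma_2(H) = \gamma_2(P) \times \gamma_2(Q)$, so $H/\Z(H) \cong (P/\Z(P)) \times (Q/\Z(Q))$. I would use that this quotient is isomorphic to the $p$-group $G$: then the $p'$-group $Q/\Z(Q)$ is trivial, so $Q$ is abelian and $\gamma_2(Q) = 1$. Feeding this into the hypothesis $\Z(H) \le \gamma_2(H)$, which now reads $\Z(P) \times Q \le \gamma_2(P) \times 1$, forces $Q = 1$. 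Hence $H = P$ is a finite $p$-group with $G \cong P/\Z(P)$ and $\Z(P) \le \gamma_2(P)$, which is exactly what is claimed.

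There is no genuine obstacle here, since the substantive work has already been absorbed into Lemma~\ref{lemma1} and Corollary~\ref{cor1}; the one point worth flagging is that the final step really does use the condition $\Z(H) \le \gamma_2(H)$ and not merely $G \cong H/\Z(H)$. Indeed, without it one could take $H = G \times A$ for any nontrivial abelian $p'$-group $A$: this $H$ is finite and nilpotent and satisfies $G \cong H/\Z(H)$, yet is not a $p$-group. So the only content remaining beyond the cited results is the short Sylow-decomposition argument above.
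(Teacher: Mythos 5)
Your proposal is correct and follows essentially the same route as the paper: invoke Corollary \ref{cor1} to get a finite nilpotent $H$ with $\Z(H) \le \gamma_2(H)$, split $H$ as the direct product of its Sylow $p$-subgroup and Hall $p'$-subgroup, and use the condition $\Z(H) \le \gamma_2(H)$ to force the $p'$-part to vanish. Your remark that this last condition is genuinely needed (and not just $G \cong H/\Z(H)$) is exactly the point the paper's argument also turns on.
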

\begin{proof}
 Since $G$ is a finite  capable $p$-group, it follows from Corollary \ref{cor1} that there is a finite nilpotent group $H$ such that $G \cong H/\Z(H)$, $\Z(H) \le \gamma_2(H)$. Thus $H/\Z(H)$ is a $p$-group. Let $P$ be the Sylow $p$-subgroup of $H$. Then $H = P \times K$, where $K$ is a Hall 
$p'$-subgroup of $H$. Since $H/\Z(H)$ is a $p$-group, it follows that $K \le \Z(H)$. Thus $[H, H] = [P \times K, P \times K] = [P, P] \le P$. Since  $\Z(H) \le \gamma_2(H)$, we have $K \le P$. Thus $K$ must be trivial. This proves that $H$ is a $p$-group and the proof of the lemma is complete. \hfill $\Box$
 
\end{proof}

We need the following lemma, which is also of an independent interest, for the proof of Theorem A.

\begin{lemma}\label{lemma3}
Let $G$ be a finite $p$-group of class $3$ such that $\gamma_2(G)\Z(G)/\Z(G)$
is cyclic. Then $G/\Z_2(G)$ is generated by $2$ elements.
\end{lemma}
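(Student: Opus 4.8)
The plan is to reduce the class-$3$ statement to a statement about the quotient $\br{G} = G/\gamma_3(G)$, which is a $p$-group of class $\le 2$, and then run a generator-counting argument using the commutator map. First I would observe that since $\gamma_3(G) \le \Z_2(G)$, we have $G/\Z_2(G) \cong \br{G}/\Z_2(\br{G})$, so it suffices to show that the class-$2$ group $\br{G}$ has $\br{G}/\Z_2(\br{G}) = \br{G}/\Z(\br{G})$ generated by $2$ elements (here $\Z_2(\br G)=\br G$ would make the claim trivial, so one assumes $\br G$ is genuinely of class $2$). Note also that $\gamma_2(\br{G})\Z(\br{G})/\Z(\br{G})$ is a homomorphic image of $\gamma_2(G)\Z(G)/\Z(G)$, hence still cyclic. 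So the whole problem is: \emph{a finite $p$-group $K$ of class $2$ with $\gamma_2(K)\Z(K)/\Z(K)$ cyclic has $K/\Z(K)$ $2$-generated.}

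For that reduced statement I would argue as follows. Write $V = K/\Z(K)$, an elementary-abelian-by-\ldots no --- better, work with the commutator pairing. The map $K/\Z(K) \times K/\Z(K) \to \gamma_2(K)$, $(x\Z(K), y\Z(K)) \mapsto [x,y]$, is a well-defined alternating bi-additive map because $K$ has class $2$. Passing mod $\Z(K)\gamma_2(K)$ in the second slot, and using that $\gamma_2(K)\Z(K)/\Z(K)$ is cyclic, say generated by the image of $c = [a,b]$, one shows that for each $x \in K$ the map $y\Z(K) \mapsto [x,y]$ takes values in $\gen{[a,b]}$; in fact I would show $[x,y] \in \gen{[a,b]}$ for all $x,y$, i.e.\ $\gamma_2(K) = \gen{[a,b]}$ is itself cyclic. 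The key point is then: if $\gamma_2(K) = \gen{[a,b]}$ is cyclic of order $p^m$, the pairing $V \times V \to \Z/p^m$ is a nondegenerate (after killing the left/right radical, which is exactly $\Z(K)/\Z(K) = 1$) alternating form over $\Z/p^m$, and a nondegenerate alternating form on a finite abelian $p$-group whose image is cyclic of order $p^m$ forces the group to be a free $\Z/p^m$-module of rank $2$ --- hence $V$ is $2$-generated. Equivalently and more concretely: $V/pV$ carries the induced alternating $\mathbb{F}_p$-form with values in $\mathbb{F}_p$, its radical being $\{x : [x,K] \le \gen{[a,b]^p}\}$; I would show this radical has index $p^2$, so $d(K) = d(V/pV)$ relates to\ldots and after peeling off one gets exactly $2$ generators for $K/\Z(K)$.

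The step I expect to be the main obstacle is proving that $\gamma_2(K)$ cyclic-modulo-center actually forces $\gamma_2(K)$ itself to be cyclic, and then controlling the radical of the alternating pairing over $\Z/p^m$ rather than over a field. Over $\mathbb{F}_p$ the fact that a nondegenerate alternating form lives on an even-dimensional space with a cyclic (=$1$-dimensional) value group forcing dimension exactly $2$ is classical; the subtlety is the $\Z/p^m$ refinement, where one must rule out elements of $V$ that are "partially central" (centralize $K$ modulo $\gen{[a,b]^p}$ but not modulo $[a,b]$). I would handle this by a short induction on $m$: the subgroup $K_0 = \{x \in K : [x,K] \le \gen{[a,b]^p}\}$ contains $\Z(K)$ and $K/K_0$ embeds, via the pairing reduced mod $p$, into a nondegenerate alternating $\mathbb{F}_p$-space whose value group is $1$-dimensional, so $|K/K_0| = p^2$ and $K_0$ has class $2$ with $\gamma_2(K_0) \le \gen{[a,b]^p}$; apply the inductive hypothesis to $K_0/(\text{its center})$ after checking $\Z(K) = \Z(K_0) \cap (\cdots)$, and combine. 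The bookkeeping of centers across this induction is where care is needed; everything else is routine manipulation of class-$2$ commutator identities.
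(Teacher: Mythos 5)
Your reduction to $\br{G}=G/\gamma_3(G)$ destroys the hypothesis and, in fact, reduces to a false statement. In a group $K$ of class $2$ one has $\gamma_2(K)\le \Z(K)$, so the condition ``$\gamma_2(K)\Z(K)/\Z(K)$ is cyclic'' is vacuously true for \emph{every} class-$2$ group; your reduced claim therefore asserts that every finite $p$-group of class $2$ has $K/\Z(K)$ generated by $2$ elements, which fails already for an extraspecial group of order $p^5$. The failure is not merely one of losing information: the intermediate statement you aim at (that $\br{G}/\Z(\br{G})$ is $2$-generated) is false under the lemma's hypotheses. Take any $G$ of class $3$ satisfying the hypothesis and set $H=G\times E$ with $E$ extraspecial of order $p^5$; then $\gamma_2(H)\Z(H)/\Z(H)\cong \gamma_2(G)\Z(G)/\Z(G)$ is cyclic and $H/\Z_2(H)\cong G/\Z_2(G)$ (so the lemma holds for $H$), yet $\br{H}/\Z(\br{H})$ contains $E/\Z(E)\cong(\Z/p)^4$ as a direct factor and is not $2$-generated. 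Relatedly, the ``classical fact'' you invoke is not a fact: a nondegenerate alternating form with values in a cyclic group of order $p$ exists on $\mathbb{F}_p^{2n}$ for every $n$ (the standard symplectic form), so a $1$-dimensional value group does not force dimension $2$. That is exactly why Theorem A of the paper needs the capability hypothesis and cannot be proved by form-counting alone.

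The point you are missing is that the hypothesis $\gamma_2(G)\Z(G)/\Z(G)$ cyclic is only meaningful because $G$ has class $3$, so that $\gamma_2(G)\not\le\Z(G)$; any argument must therefore stay inside the class-$3$ group and compare commutators against $\Z(G)$, not against the (larger) center of a class-$2$ quotient. The paper's proof does this directly: starting from a minimal generating set $\{x_1,\dots,x_d\}$ with $[x_1,x_2]$ generating $\gamma_2(G)$ modulo $\Z(G)$, it replaces each $x_j$ ($j\ge 3$) by $y_j=x_1^{\beta_j}x_jx_2^{-\alpha_j}$ so that $[x_i,y_j]\in\Z(G)$ for $i=1,2$, and then uses the Hall--Witt identity to show $[y_j,y_k]\in\Z(G)$ as well; hence each $y_j$ lies in $\Z_2(G)$ and $G/\Z_2(G)$ is generated by the images of $x_1$ and $x_2$. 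Some such commutator-calibration against $\Z(G)$ (or an equivalent device) is unavoidable, and your proposal contains no substitute for it.
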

\begin{proof}
Let $\{x_1, x_2, \cdots, x_d\}$ be a minimal generating set for $G$. Since $\gamma_2(G)\Z(G)/$ $\Z(G)$ is
cyclic, we can assume without loss of generality that $[x_1,  x_2]\Z(G)$ generates $\gamma_2(G)\Z(G)/\Z(G)$.  Then $[x_1, x_j] = [x_1, x_2]^{\alpha_j}$ modulo $\Z(G)$ for some integer  $\alpha_j$, where $3 \le j \le d$. Since the nilpotency class of  $G/\Z(G)$ is $2$, it follows that $[x_1, x_jx_2^{-\alpha_j}] \in \Z(G)$. Similary if $[x_2, x_j] = [x_1, x_2]^{\beta_j}$ modulo $\Z(G)$ for some integer  $\beta_j$, where $3 \le j \le d$, then $[x_2, x_1^{\beta_j}x_j] \in \Z(G)$.
Thus it follows that $[x_i, x_1^{\beta_j}x_jx_2^{-\alpha_j}] \in \Z(G)$ for all $i, \; j$ such that $1 \le i \le 2$ and $3 \le j \le d$. Let us set $y_j = x_1^{\beta_j}x_jx_2^{-\alpha_j}$, where $3 \le j \le d$. Notice that the set $\{x_1, x_2, y_3, \cdots, y_d\}$ also generates $G$. 
Thus it follows that $[x_i, y_j] \in \Z(G)$ for all $1 \le i \le 2$ and $3 \le j \le d$. 
Also $[y_j, x_i] \in \Z(G)$ for the same values of $i$ and $j$. Therefore by Hall-Witt identity
\[[x_i, y_j, y_k][y_k, x_i, y_j][y_j, y_k, x_i] = 1\] 
for $1 \le i \le 2$ and $3 \le j, k \le d$, we get 
\begin{equation}\label{equation1}
[y_j, y_k, x_i] = 1.
\end{equation}
Similarly $[x_1, x_2, y_l] = 1$, since $[y_l, x_1, x_2] = 1 = [x_2, y_l, x_1] = 1$, where 
$3 \le l \le d$. Now for all $j, k$ such that $3 \le j, k \le d$, we have
$[y_j, y_k] = [x_1, x_2]^tz$ for some integer $t$ and some $z \in \Z(G)$.
Since $[x_1, x_2, y_l] = 1$, this gives $[y_j, y_k, y_l] = 1$, where $3 \le j,
k, l \le d$. This, along with \eqref{equation1}, proves that 
$[y_j, y_k] \in \Z(G)$ for all $j, k$ such that $3 \le j, k \le d$. 
Since $[y_j, x_i] \in \Z(G)$ for  $3 \le j \le d$ and 
$1 \le i \le 2$, it follows that $[y_j, G] \subseteq \Z(G)$ for all $j$ such that
$3 \le j \le d$. Hence $y_j \in \Z_2(G)$ for $3 \le j \le d$. This proves that $G/\Z_2(G)$ is generated by $x_1\Z_2(G)$ and $x_2\Z_2(G)$, which completes the proof of the lemma.
\hfill $\Box$

\end{proof}

Now we prove Theorem A.

\noindent \emph{Proof of Theorem A.}
 Since $G$ is a finite capable $p$-group of class $2$ such that $\gamma_2(G)$ is cyclic, there exists a $p$-group $H$ of nilpotency class $3$ such that $G \cong H/\Z(H)$, $\Z(H) \le \gamma_2(H)$ and $\gamma_2(H)/\Z(H)$ is cyclic. It now follows from Lemma \ref{lemma3} that $H/\Z_2(H)$ is generated by $2$ elements. This, in turn, implies that $G/Z(G)$ is generated by $2$ elements. Since the nilpotency class of $G$ is $2$,  $G/\Z(G)$ and $\gamma_2(G)$ have the same exponent $p^e$ (say) and cyclic decomposition of $G/\Z(G)$ has at least two cyclic factors of order $p^e$. Since $G/\Z(G)$ is generated by $2$ elements, any cyclic decomposition of $G/\Z(G)$ has two cyclic factors, each of  order $p^e$.  Thus $|G/\Z(G)| = p^{2e} = |\gamma_2(G)|^2$. This completes the proof of the theorem.    \hfill $\Box$

\begin{cor}\label{cor2}
Let $G$ be a finite capable $p$-group of class $2$ such that $\Z(G) \le \Phi(G)$. Then $\gamma_2(G)$ is cyclic if, and only if $G$ is a $2$-generated group.
\end{cor}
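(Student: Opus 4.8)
The plan is to derive Corollary \ref{cor2} directly from Theorem A together with the hypothesis $\Z(G) \le \Phi(G)$. First I would handle the easy direction: if $G$ is $2$-generated of class $2$, then $\gamma_2(G)$ is generated by the single commutator of the two generators (all other commutators being trivial since the class is $2$), so $\gamma_2(G)$ is cyclic; this needs no hypothesis on $\Z(G)$.

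For the forward direction, suppose $\gamma_2(G)$ is cyclic. Since $G$ is capable of class $2$ with cyclic commutator subgroup, Theorem A applies and tells us that $G/\Z(G)$ is generated by $2$ elements, say $G = \gen{x_1, x_2}\Z(G)$. The key step is then to promote this to a statement about $G$ itself using $\Z(G) \le \Phi(G)$: from $G = \gen{x_1, x_2}\Z(G)$ and $\Z(G) \le \Phi(G)$ we get $G = \gen{x_1, x_2}\Phi(G)$, and since $\Phi(G)$ consists exactly of the non-generators of $G$, it follows that $G = \gen{x_1, x_2}$. Hence $G$ is $2$-generated.

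I do not expect a serious obstacle here, as the corollary is essentially a packaging of Theorem A with the Frattini-subgroup hypothesis; the only point requiring (standard) care is the use of the fact that elements of $\Phi(G)$ are non-generators, i.e. that $G = S\Phi(G)$ implies $G = \gen{S}$ for a finite $p$-group $G$. This is the Burnside basis theorem's underlying principle and I would simply cite it.
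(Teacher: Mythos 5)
Your proof is correct and follows essentially the same route as the paper: the easy direction is immediate for $2$-generated class-$2$ groups, and the forward direction applies Theorem A to get $G/\Z(G)$ two-generated, then uses $\Z(G)\le\Phi(G)$ and the Burnside basis (non-generator) property to conclude $G$ itself is $2$-generated.
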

\begin{proof}
Since $\gamma_2(G)$ is cyclic for every $2$-generated finite $p$-group $G$ of class $2$,  we only need to prove the if part of the corollary to complete the proof.
Let $\gamma_2(G)$ be cyclic. Since $G$ is finite capable $p$-group of class $2$, it follows from Theorem A that $G/\Z(G)$ is $2$-generated. Now using the hypothesis  $\Z(G) \le \Phi(G)$, one can immediately see that $G/\Phi(G)$ is $2$-generated. This proves that $G$ is $2$-generated. \hfill $\Box$

\end{proof}

We need the following two theorems for the proof of Theorem B. 

\begin{thm}[Theorem 8.1, \cite{AM06}]\label{thm1}
Let $G$ be a $2$-generated finite $2$-group of nilpotency class $2$. Then $G$ is capable if and only if it is isomorphic to one of the following groups:
\begin{eqnarray*} 
\ref{thm1}(i) \hspace{.55in}G &=& \langle a, b \;|\; a^{2^{\alpha}} = b^{2^{\beta}} = [a, b]^{2^{\gamma}} = [a, b, a] = [a, b, b] = 1 \rangle,\\
& & \text{where} \;\; \alpha, \beta, \gamma \;\; \text{are positive integers satisfying} \;\;\alpha = \beta\;\; \text{or} \\
& &\alpha = \beta + 1 = \gamma + 1.\\
\ref{thm1}(ii) \hspace{.50in} G &= &\langle a, b \;|\; a^{2^{\alpha}} = b^{2^{\beta}} = [a, b, a] = [a, b, b] = 1, a^{2^{\alpha + \sigma -\gamma}} = [a, b]^{2^{\sigma}} \rangle,\\
& & \text{where} \;\; \alpha, \beta, \gamma, \sigma \;\; \text{are positive integers satisfying}\;\; \alpha = \beta\\
& &\text{and}\;\; \gamma < \beta - 1\;\; \text{or} \;\; \alpha = \beta + 1 = \gamma + 1 = \sigma + 2.\nonumber
\end{eqnarray*}
\end{thm}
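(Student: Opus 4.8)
\noindent\emph{Proof proposal for Theorem \ref{thm1}.}
Since this statement is \cite[Theorem 8.1]{AM06}, I only sketch the strategy of its proof. The plan is to start from the classification of \emph{all} $2$-generated finite $2$-groups of nilpotency class $2$ obtained in \cite{KVS99} and then to decide capability family by family, using the classical criterion that a finite group $G$ is capable if and only if its epicenter $Z^*(G)$ is trivial. By \cite{KVS99} every such $G$ has a presentation in a normal form in which $[a,b]$ is central of order $2^{\gamma}$, $[a,b,a]=[a,b,b]=1$, and $a^{2^{\alpha}}$, $b^{2^{\beta}}$ are prescribed (possibly trivial) powers of $[a,b]$; after an appropriate change of the generating pair one arranges that at most one of these two powers is a nontrivial power of $[a,b]$. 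This partitions the groups into the families of \ref{thm1}(i) (both powers trivial) and \ref{thm1}(ii) (one power a genuine power of $[a,b]$), each with explicit admissible ranges of $\alpha,\beta,\gamma$ (and $\sigma$), so the task reduces to computing $Z^*(G)$ as a function of those parameters.

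For the ``only if'' direction I would compute $Z^*(G)$ explicitly from the normal form, along the lines of the odd-prime treatment in \cite[Corollary 4.3]{BK03}: describe the nonabelian exterior square $G\wedge G$ and the Schur multiplier of a $2$-generated class-$2$ group in terms of $a\wedge b$ and the relevant power elements, and read off $Z^*(G)$ as an explicit central subgroup of $G$ depending on $\alpha,\beta,\gamma,\sigma$. One then checks that whenever the parameter tuple violates the displayed conditions --- for instance when the two generators are too unbalanced ($\alpha>\beta+1$), or when $\gamma$ is small relative to $\beta$ without the compensating relation of (ii) --- a nontrivial central element survives in $Z^*(G)$, so $G$ is not capable. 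Note that Theorem A of the present paper yields only a consistency check here, namely $|G/\Z(G)|=|\gamma_2(G)|^2$, and this equality holds automatically for $2$-generated groups of class $2$, so it does not by itself eliminate any family.

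For the ``if'' direction one shows that $Z^*(G)=1$ for the admissible tuples; equivalently, and perhaps more transparently, for each $G$ in \ref{thm1}(i) or \ref{thm1}(ii) one exhibits a finite $2$-group $H$ with $G\cong H/\Z(H)$. By Lemma \ref{lemma2} one may take $\Z(H)\le\gamma_2(H)$, and since $G$ has class $2$ this $H$ has class at most $3$. The natural witness is obtained by ``lifting'' the presentation of $G$: raise the order of the commutator from $2^{\gamma}$ to $2^{\gamma+1}$ (or higher), and, where needed, adjoin a single auxiliary generator, so that the elements which were trivial in $G$ --- such as $[a,b]^{2^{\gamma}}$, or in family (ii) the stipulated power of a generator --- become precisely the generators of $\Z(H)$; one then verifies that $H$ is finite of the predicted order, that $\Z(H)$ has the predicted structure, and that $H/\Z(H)$ recovers the presentation of $G$.

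The hard part is the exact determination of $Z^*(G)$ at the prime $2$. In contrast with the odd-prime case, the exterior square carries extra $2$-torsion because squaring is not bijective; the family \ref{thm1}(ii), in which a power of a generator is tied to a power of $[a,b]$ (already constrained by the consistency conditions that class $2$ forces on the parametrisation), together with the boundary cases having $\alpha=\beta+1$ rather than $\alpha=\beta$ and the coupling $\gamma+1=\sigma+2$, all demand a careful case analysis; and the normal-form bookkeeping must be kept coherent across the whole parameter range. Carrying this out uniformly is the substance of \cite{AM06}.
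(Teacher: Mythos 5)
The paper does not prove this statement at all: it is quoted verbatim as \cite[Theorem 8.1]{AM06} and used as a black box in the proof of Theorem B, so there is no in-paper argument to compare your sketch against. Your outline correctly identifies the strategy of the cited source (normal forms from \cite{KVS99}, the epicenter criterion $Z^*(G)=1$, and explicit central extensions as witnesses for the capable families), and your observation that Theorem A gives no independent leverage here is accurate; but be clear that what you have written is a roadmap, not a proof --- the entire content lies in the deferred computation of $Z^*(G)$ across the parameter families at $p=2$, which is exactly the work of \cite{AM06} and which your sketch does not reproduce.
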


\begin{thm}[Corollary 4.3, \cite{BK03}]\label{thm2}
Let $G$ be a $2$-generated finite $p$-group of nilpotency class $2$, where $p$ is an odd prime. Then $G$ is capable if and only if it is isomorphic to one of the following groups:
\begin{eqnarray*}
\ref{thm2}(i) \hspace{.55in} G &=& \langle a, b \;|\; a^{p^{\alpha}} = b^{p^{\beta}} = [a, b]^{p^{\gamma}} = [a, b, a] = [a, b, b] = 1 \rangle,\\
& & \text{where}\;\; \alpha, \beta, \gamma \;\; \text{are positive integers satisfying}\;\;\alpha = \beta \ge \gamma.\nonumber\\
\ref{thm2}(ii) \hspace{.5in} G &=& \langle a, b \;|\; a^{p^{\alpha}} = b^{p^{\beta}}  = [a, b, a] = [a, b, b] = 1, [a, b] = a^{p^{\alpha - \gamma}}\rangle,\\
& & \text{where}\;\;\alpha, \beta, \gamma \;\; \text{are positive integers satisfying}\;\; \alpha = \beta \ge 2 \gamma.\nonumber
\end{eqnarray*}
\end{thm}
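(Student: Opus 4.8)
The plan is to combine a normal form for the relevant groups with the homological criterion for capability, rather than with the crude index bound. Recall first that every member of the two families is a $2$-generated finite $p$-group of class $2$ with $\gamma_2(G) = \langle [a,b]\rangle$ cyclic of order $p^{\gamma}$; a direct commutator computation (using only that the class is $2$) shows that $a^{i}b^{j} \in \Z(G)$ if and only if $p^{\gamma}$ divides both $i$ and $j$, so that $G/\Z(G) \cong C_{p^{\gamma}} \times C_{p^{\gamma}}$ and $|G/\Z(G)| = p^{2\gamma} = |\gamma_2(G)|^{2}$ for \emph{every} such group. Consequently Theorem A, while a genuine necessary condition, is automatically satisfied here and cannot by itself separate the capable groups from the non-capable ones (for instance the nonabelian group of order $p^{3}$ and exponent $p^{2}$ meets the Theorem A bound yet is not capable). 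This forces the use of a finer invariant.

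The invariant I would use is the \emph{epicentre} $Z^{*}(G)$: a finite group is capable exactly when $Z^{*}(G) = 1$, and by a theorem of Ellis the epicentre coincides with the \emph{exterior centre}
\[
Z^{\wedge}(G) = \{\, g \in \Z(G) : g \wedge x = 1 \text{ in } G \wedge G \text{ for all } x \in G \,\},
\]
where $G \wedge G$ denotes the nonabelian exterior square. The proof then has three steps. \textbf{Step 1:} establish (or quote) the normal form, i.e.\ that every $2$-generated finite $p$-group of class $2$ for odd $p$ is isomorphic to one of a parametrised list in which the two displayed families of Theorem \ref{thm2} sit, the parameters being the orders $p^{\alpha},p^{\beta}$ of the generators in $G/\gamma_2(G)$, the order $p^{\gamma}$ of $[a,b]$, and the way $a^{p^{\alpha}}$ is linked to $\langle [a,b]\rangle$ (split, giving family~(i), or non-split with $[a,b] = a^{p^{\alpha-\gamma}}$, giving family~(ii)). \textbf{Step 2:} compute $G \wedge G$ and the pairing $g \mapsto g \wedge x$ for these normal-form groups; for class-$2$ groups $G \wedge G$ is a finitely generated abelian group with an explicit presentation in terms of $a$ and $b$, so $Z^{\wedge}(G)$ can be read off as the subgroup of $\Z(G)$ on which this pairing degenerates, a subgroup determined by $\alpha,\beta,\gamma$. \textbf{Step 3:} impose $Z^{\wedge}(G) = 1$ and solve: the triviality of the exterior centre translates into $\alpha = \beta$ together with an exponent inequality, which comes out as $\alpha = \beta \ge \gamma$ in the split case and $\alpha = \beta \ge 2\gamma$ in the non-split case, reproducing exactly families~(i) and~(ii).

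For the sufficiency direction one may instead argue constructively, which I would present in parallel as a check: for each listed group $G$ one exhibits an explicit finite $p$-group $H$ of class $\le 3$ with $H/\Z(H) \cong G$, taking $H$ to be the appropriate $2$-generated cover in which the generators receive the orders dictated by $\alpha = \beta$ and the commutator relation is lifted one step up the lower central series. Verifying $\Z(H) \le \gamma_2(H)$ and $H/\Z(H)\cong G$ is then a finite commutator calculation, and Lemma \ref{lemma2} guarantees we lose nothing by working inside the category of $p$-groups.

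The main obstacle is Step 2 together with the exponent bookkeeping in Step 3: one must compute the exterior square precisely enough to see where the factor $2$ in $\alpha = \beta \ge 2\gamma$ originates, which is exactly the point where the non-split linking $[a,b]=a^{p^{\alpha-\gamma}}$ feeds a second power of $p$ into the relation $a^{p^{\alpha}} \wedge x = 1$. This is also where the hypothesis that $p$ is odd does real work: the Hall--Petrescu collection formula has no exceptional $2$-part, so $[g^{p^{k}},h] = [g,h]^{p^{k}}$ cleanly modulo higher terms, whereas for $p = 2$ extra commutators survive and produce the additional parameter $\sigma$ and the extra cases visible in Theorem \ref{thm1}.
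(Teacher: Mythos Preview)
The paper does not prove this statement at all: Theorem~\ref{thm2} is quoted verbatim as Corollary~4.3 of Bacon--Kappe \cite{BK03} and is used, together with Theorem~\ref{thm1}, merely as input to the one-line deduction of Theorem~B from Corollary~\ref{cor2}. So there is no ``paper's own proof'' to compare against; the author treats it as a black box.

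Your outline is, however, essentially the strategy that \cite{BK03} itself follows, so in that sense you have reconstructed the intended argument rather than deviated from it. Bacon and Kappe first give a normal-form classification of $2$-generated $p$-groups of class~$2$ for odd $p$ (your Step~1), then compute the nonabelian tensor square of each such group and read off the epicentre (your Step~2, with $G\otimes G$ in place of $G\wedge G$, which makes no difference for the capability criterion), and finally impose $Z^{*}(G)=1$ to isolate the parameter constraints $\alpha=\beta\ge\gamma$ and $\alpha=\beta\ge 2\gamma$ (your Step~3). Your diagnosis that Theorem~A alone cannot separate capable from non-capable groups here is correct and is precisely why a finer invariant is needed; your remark on where the odd-$p$ hypothesis enters via clean collection is also accurate. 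The only substantive gap in the proposal is that Step~2 is asserted rather than carried out: the actual computation of the exterior (or tensor) square and the determination of exactly which central elements pair trivially is the bulk of the work in \cite{BK03}, and your sketch gives no indication of how the relations in $G\wedge G$ are derived or why the inequality sharpens from $\gamma$ to $2\gamma$ in the non-split case beyond a heuristic about ``a second power of $p$''.
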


\noindent \emph{Proof of Theorem B.} Using Corollary \ref{cor2}, the proof immediately follows from Theorems \ref{thm1} and \ref{thm2}. \hfill $\Box$
 
\begin{remark}
It may be interesting (but I do not any reason for this) to classify finite capable $p$-groups $G$ of nilpotency class $2$ such that $\gamma_2(G)$ is cyclic. 
\end{remark}

\end{document}